\theoremstyle{plain}
\newtheorem{theorem}{Theorem}
\newtheorem{lemma}{Lemma}
\newtheorem*{theo*}{Theorem}
\newtheorem{corollary}{Corollary}
\theoremstyle{definition}
\newtheorem*{definition*}{Definition}
\newtheorem{example}{Example}
\newtheorem{remark}{Remark}
\DeclareMathOperator{\Ker}{Ker}
\DeclareMathOperator{\diver}{div}
\begin{document}
\sloppy
\title[Centralizers of Jacobian derivations]
{Centralizers of Jacobian derivations}
\author
{D.I.Efimov, A.P.Petravchuk, M.S.Sydorov}
\address{D.I.Efimov: Department of Algebra and Computer Mathematics, Faculty of Mechanics and Mathematics,
	Taras Shevchenko National University of Kyiv, 64, Volodymyrska street, 01033  Kyiv, Ukraine
}
\email{danil.efimov@yahoo.com}
\address{A.P.Petravchuk: Department of Algebra and Computer Mathematics, Faculty of Mechanics and Mathematics,
	Taras Shevchenko National University of Kyiv, 64, Volodymyrska street, 01033  Kyiv, Ukraine
}
\email{petravchuk@knu.ua , apetrav@gmail.com}
\address{M.S.Sydorov:
	Department of Algebra and Computer Mathematics, Faculty of Mechanics and Mathematics,
	Taras Shevchenko National University of Kyiv, 64, Volodymyrska street, 01033  Kyiv, Ukraine}
\email{smsidorov95@gmail.com}
\date{\today}
\keywords{Lie algebra,  Jacobian derivation, differential equation, centralizer, integrable system }
\subjclass[2000]{Primary 17B66; Secondary 17B80}

%
\begin{abstract}
Let $\mathbb K$ be an algebraically closed field of characteristic zero,
$\mathbb K[x, y]$ the polynonial ring in variables $x$, $y$ and let
$W_2(\mathbb K)$ be the Lie algebra of all $\mathbb K$-derivations on
$\mathbb K[x, y]$.
A derivation $D \in W_2(\mathbb K)$ is called a Jacobian derivation
if there exists $f \in \mathbb K[x, y]$ such that $D(h) = \det J(f, h)$
for any $h \in \mathbb K[x, y]$
(here $J(f, h)$ is the Jacobian matrix for $f$ and $h$).
Such a derivation is denoted by $D_f$.
The kernel of $D_f$ in $\mathbb K[x, y]$ is a subalgebra $\mathbb K[p]$
where $p=p(x, y)$ is a polynomial of smallest degree such that
$f(x, y) = \varphi (p(x, y)$ for some $\varphi (t) \in \mathbb K[t]$. Let $C = C_{W_2(\mathbb K)} (D_f)$ be the centralizer of $D_f$ in
$W_2(\mathbb K)$.
We prove that $C$ is the free  $\mathbb K[p]$-module of rank 1
or 2 over $\mathbb K[p]$ and point out a criterion  of being a module of rank $2$.
These results  are used to obtain a class of integrable autonomous systems  of differential equations.

 \end{abstract}
\maketitle

\section{Introduction}
Let $\mathbb K$ be an algebraically closed field of characteristic zero,
$\mathbb K[x, y]$ the polynomial ring in variables $x$, $y$ and
$R = \mathbb K(x, y)$ the field of rational functions.
Recall that a $\mathbb K$-linear map
$D: \mathbb K[x, y]  \mathbb \longrightarrow \mathbb K[x, y]$ is called
a $\mathbb K$-derivation (or a derivation if $\mathbb K$ is fixed) if
$D(fg) = D(f)g + fD(g)$ for any $f, g \in \mathbb K[x, y]$.
All the $\mathbb K$-derivations on $\mathbb K[x, y]$ form a Lie algebra
over $\mathbb K$ (denoted by $W_2(\mathbb K)$) with respect to the
operation $[D_1, D_2] = D_1D_2 - D_2D_1$.
Every element $D \in W_2(\mathbb K)$ can be uniquely written in the
form $D = f(x, y)\partial_x + g(x, y) \partial_y$, where
$\partial_x:=\frac{\partial}{\partial x}, \partial_y:=\frac{\partial}{\partial y}$ are partial derivatives on $\mathbb K[x, y]$. The latter means that $W_2(\mathbb K)$ is a free module of rank $2$ over $\mathbb K[x, y]$ and $\{ \partial _x, \partial _y\}$ is a free basis of this module.
The Lie algebra $W_2(\mathbb K)$ is, from the geometrical point of view,  the Lie algebra of all
polynomial vector fields on $\mathbb K^2$ and was studied intensively
from many points of view (see, for example, \cite{Lie}, \cite{Olver}, \cite{Petien2}).

Let $f\in \mathbb K[x, y]$.  The  polynomial $f$ defines a    derivation $D_f \in W_2(\mathbb K)$ by the rule: $D_f(h) = \det J(f, h)$ for any $h \in \mathbb K[x, y]$ (here $J(f, h)$ is the Jacobian matrix for $f$ and $h$). 
The derivation $D_f$ is called the Jacobian derivation
associated with the  polynomial $f$.
The kernel
$\Ker D_f$  in $\mathbb K[x, y]$ is an integrally closed subalgebra
of $\mathbb K[x, y]$ and $f \in \Ker D_f$. By \cite{Nag_Nov},
$\Ker D_f = \mathbb K [p]$, where $p$ is a generative closed  polynomial for $f$.

We study the structure of the centralizer $C_{W_2(\mathbb K)} (D_f)$.
This centralizer is of interest because from viewpoint of theory of ODE with any
derivation $D = f(x, y) \partial_x + g(x, y) \partial_y$ one can  associate 
an autonomous system of  ordinary differential equations 
$$ \frac{dx}{dt}=f(x, y), \ \frac{dy}{dt}=g(x, y)$$
and elements from $C_{W_2(\mathbb K)} (D)$ give
information about solutions of this system.

We give a criterion for a Jacobian derivation $D_f$ to have  the centralizer of rank 2
over $\Ker D_f$ (Theorem 1).
We also prove that  $C_{W_2(\mathbb K)}(D_f)$ is a free module
over the subalgebra $\mathbb K [p]$ of rank 1 or 2 (Theorem 2).
We  point out an example of integrable system of differential equations associated with a Jacobian derivation of special type.

We use standard notations. If $T = P \partial_x + Q \partial_y$
then the divergence $\diver T$ is defined as for a vector field with components $P, Q$: $\diver T = P_x' + Q_y'$.
If $T = P \partial_x + Q \partial_y$ is divergence-free (i.e., 
$\diver T = 0$), then $T = D_f$ for a polynomial $f$ that is a
``potential'' for the vector field determined by $T$. A polynomial $f\in \mathbb K[x, y]$ is called a closed polynomial if the subalgebra $\mathbb K[f]$ is integrally closed in the polynomial algebra $\mathbb K[x, y] $.  For any polynomial $f\in \mathbb K[x, y]$ there exists a closed polynomial $p(x, y) $ such that $f=\varphi (p)$ for some polynomial $\varphi \in \mathbb K[x, y].$ This  polynomial $p(x, y)$ will be called a generative closed polynomial for $f(x, y).$ If $L$ is a subalgebra of the Lie algebra $W_2(\mathbb K)$  then $\dim _RRL$ will be called the rank of $L$ and denoted by ${\rm rk}_{\mathbb K[x, y]}L $  or simply by ${\rm rk} L. $

\section{A criterion for centralizers to have rank $2$}
Some properties of derivations on polynomial rings are collected in the next lemma. 
\begin{lemma}\label{lem1}
	{\rm (1)}
	Let $D_1, D_2 \in W_2(\mathbb K)$ and $f, g \in \mathbb K[x, y ]$.
	Then 
	$$[fD_1, gD_2] = fD_1(g)D_2 -gD_2(f)D_1 + fg[D_1, D_2].$$
	
	{\rm (2)}
	If $f \in \mathbb K[x, y]$ and $p$ is a   generative closed  polynomial
	for $f$, then $\Ker D_f = \mathbb K[p]$.
	
	{\rm (3)} If $T\in W_2(\mathbb K)$ and ${\rm div}T=0$, then  $T=D_g$ for some polynomial $g\in \mathbb K[x, y].$ 
\end{lemma}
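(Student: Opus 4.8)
The plan is to treat the three parts separately, since they are of quite different character. For part (1) I would simply evaluate both sides on an arbitrary $h\in\mathbb K[x,y]$ and apply the Leibniz rule twice. Expanding $(fD_1)(gD_2)(h)=fD_1(g)D_2(h)+fgD_1(D_2(h))$ and the symmetric expression for $(gD_2)(fD_1)(h)$, the two second-order terms $fgD_1(D_2(h))$ and $gfD_2(D_1(h))$ combine into $fg[D_1,D_2](h)$, while the remaining terms give $fD_1(g)D_2(h)-gD_2(f)D_1(h)$. Since this holds for every $h$, the operator identity follows. This is a routine verification with no real obstacle.

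For part (2) the statement is exactly the theorem of Nagata--Nowicki quoted in the introduction as \cite{Nag_Nov}: the kernel of a nonzero Jacobian derivation $D_f$ on $\mathbb K[x,y]$ is a polynomial subalgebra $\mathbb K[p]$, whose generator one identifies with a generative closed polynomial for $f$. I would invoke that result directly, observing only that $f\in\Ker D_f$ forces $\mathbb K[f]\subseteq\mathbb K[p]$, consistent with the relation $f=\varphi(p)$.

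The substance is in part (3). Writing $T=P\partial_x+Q\partial_y$, I would first record how a Jacobian derivation looks in the basis $\{\partial_x,\partial_y\}$: from $D_g(h)=g_xh_y-g_yh_x$ one computes $D_g(x)=-g_y$ and $D_g(y)=g_x$, so that $D_g=-g_y\partial_x+g_x\partial_y$. Hence finding $g$ with $T=D_g$ amounts to solving the system $g_x=Q,\ g_y=-P$, i.e.\ to constructing a polynomial ``potential'' for the field $(Q,-P)$. The compatibility condition $(g_x)_y=(g_y)_x$ for such a $g$ reads $Q_y=-P_x$, which is precisely the hypothesis $\diver T=P_x+Q_y=0$.

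It then remains to produce the potential, and here I would use the polynomial analogue of the Poincar\'e lemma. Setting $g=\int_0^x Q(s,y)\,ds+c(y)$, which is again a polynomial, gives $g_x=Q$ automatically; differentiating in $y$ and substituting $Q_y=-P_x$ yields $g_y=-P(x,y)+P(0,y)+c'(y)$, so it suffices to take $c(y)=-\int_0^y P(0,t)\,dt$ to cancel the extra term and obtain $g_y=-P$. Then $D_g=T$, as required. The only point needing attention is that integration of polynomials stays inside $\mathbb K[x,y]$, which is exactly where the characteristic zero hypothesis enters; beyond that there is no genuine obstacle.
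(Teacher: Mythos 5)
Your proposal is correct. For part (1) you do exactly what the paper does (a direct Leibniz-rule calculation on an arbitrary $h$), and for part (2) you invoke the Nagata--Nowicki result, which matches the paper's treatment (the paper cites \cite{Petien}, the introduction cites \cite{Nag_Nov}; both amount to quoting the known description of $\Ker D_f$). The real difference is in part (3): the paper disposes of it with a bare citation to Nowicki's book \cite{Now}, whereas you actually prove it, via the polynomial Poincar\'e lemma. Your argument is sound: writing $T=P\partial_x+Q\partial_y$ and noting $D_g=-g'_y\partial_x+g'_x\partial_y$ (which agrees with the sign convention the paper uses later, e.g.\ $D_p=-p'_y\partial_x+p'_x\partial_y$ in the proof of Theorem 2), the problem reduces to solving $g'_x=Q$, $g'_y=-P$, whose integrability condition $Q'_y=-P'_x$ is exactly $\diver T=0$; the explicit antiderivative $g=\int_0^x Q(s,y)\,ds-\int_0^y P(0,t)\,dt$ stays in $\mathbb K[x,y]$ precisely because $\operatorname{char}\mathbb K=0$. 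What your route buys is a self-contained lemma, and it makes visible where characteristic zero is used; what the paper's route buys is brevity, deferring a standard fact to the standard reference. There is no gap in your argument.
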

\begin{proof}
	(1) Direct calculation. (2)  See, for example, \cite{Petien}. (3) See, for example,  \cite{Now}.
\end{proof}
\begin{lemma}\label{lem2}
	Let $T \in W_2(\mathbb K)$ and
	$T(f) = \lambda f$ for some
	polynomials $f, \lambda \in \mathbb K[x, y]$.
	Then $[T, D_f] = D_{\lambda f} - (\diver T) D_f$.
\end{lemma}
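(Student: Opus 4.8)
The plan is to deduce the lemma from the following more general identity, valid for \emph{every} $T \in W_2(\mathbb K)$ and \emph{every} $g \in \mathbb K[x,y]$:
\begin{equation}\label{star}
[T, D_g] = D_{T(g)} - (\diver T)\, D_g .
\end{equation}
Once \eqref{star} is available the lemma is immediate: taking $g = f$ and using the hypothesis $T(f) = \lambda f$, we have $T(f)=\lambda f$ and hence $D_{T(f)} = D_{\lambda f}$, so \eqref{star} reads $[T, D_f] = D_{\lambda f} - (\diver T)\,D_f$. Thus the real content lies in \eqref{star}, in which the eigenvalue relation plays no role; the eigenvalue hypothesis serves only to replace $D_{T(f)}$ by $D_{\lambda f}$.

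To prove \eqref{star} I would first observe that both sides are derivations of $\mathbb K[x,y]$ (the right-hand side is a Jacobian derivation plus a polynomial multiple of one), so it suffices to check that they act identically on an arbitrary $h \in \mathbb K[x,y]$. Writing $\{a,b\} := D_a(b) = a_x' b_y' - a_y' b_x'$ for the Jacobian bracket, evaluation on $h$ gives
\[
[T, D_g](h) = T(\{g,h\}) - \{g, T(h)\}, \qquad \bigl(D_{T(g)} - (\diver T)\,D_g\bigr)(h) = \{T(g), h\} - (\diver T)\{g,h\}.
\]
Hence \eqref{star} is equivalent to a single ``divergence-corrected Leibniz rule'' for the Jacobian bracket,
\[
T(\{g,h\}) = \{T(g), h\} + \{g, T(h)\} - (\diver T)\{g,h\},
\]
and this is the identity I would aim to verify directly.

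The remaining step is a computation. Expanding $T(\{g,h\}) = T(g_x' h_y' - g_y' h_x')$ by the Leibniz rule for $T$ and then commuting $T$ past each partial derivative via $[T,\partial_x] = -P_x'\partial_x - Q_x'\partial_y$ and $[T,\partial_y] = -P_y'\partial_x - Q_y'\partial_y$ (which follow from part (1) of Lemma~\ref{lem1} applied to $T = P\partial_x + Q\partial_y$, or by a one-line direct check), one rewrites every term $T(g_x')$, $T(h_y')$, etc.\ as $(T(g))_x'$, $(T(h))_y'$, \dots\ plus correction terms linear in the first partials $P_x', Q_x', P_y', Q_y'$. The pieces built from $T(g)$ and $T(h)$ reassemble exactly into $\{T(g),h\} + \{g,T(h)\}$. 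The only place where care is needed is the bookkeeping of the eight correction terms: the cross-contributions involving $Q_x'$ and $P_y'$ cancel in pairs, while those involving $P_x'$ and $Q_y'$ collect into $-(P_x' + Q_y')\{g,h\} = -(\diver T)\{g,h\}$, which is precisely the claimed correction. I expect no conceptual obstacle here—the whole difficulty is tracking signs through the expansion and seeing the off-diagonal corrections cancel so that exactly the divergence survives.
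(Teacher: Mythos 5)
Your proposal is correct, and I verified that the computation you outline goes through exactly as you describe: expanding $T(\{g,h\})$ via $[T,\partial_x] = -P_x'\partial_x - Q_x'\partial_y$ and $[T,\partial_y] = -P_y'\partial_x - Q_y'\partial_y$, the eight correction terms behave precisely as you claim (the $Q_x'$- and $P_y'$-terms cancel in pairs, the $P_x'$- and $Q_y'$-terms assemble into $-(\diver T)\{g,h\}$). However, your route is genuinely different from the paper's. The paper never isolates the universal identity $[T,D_g] = D_{T(g)} - (\diver T)D_g$; instead it writes the eigenvalue hypothesis as $Pf_x' + Qf_y' = \lambda f$, differentiates this relation with respect to $x$ and $y$, and substitutes the resulting expressions into the component form of $[T,D_f]$, so the hypothesis $T(f)=\lambda f$ is woven through the entire computation. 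Your approach makes clear that the eigenvalue relation is irrelevant to the commutation formula and enters only to rewrite $D_{T(f)}$ as $D_{\lambda f}$; this is a real structural gain, since the same general identity immediately yields the paper's Lemma~3 (take $T(f)=c$, so $D_{T(f)}=0$ and $[T,D_f]=-(\diver T)D_f$) and the relation $[T,D_p] = (\psi'(p)-\diver T)D_p$ used in the proof of Theorem~1, each of which the paper establishes by separate coordinate computations. What the paper's version buys in exchange is only self-containedness for the stated lemma; your formulation is the cleaner and more reusable one.
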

\begin{proof}
	Let us write down  the derivation $T$ in the form
	$T = P \partial_x + Q \partial_y$ for some polynomials
	$P, Q \in \mathbb K[x, y]$. Then the condition
	$T(f) = \lambda f$ can be written in the form 
	\begin{equation}\label{e1}    Pf'_x + Qf'_y = \lambda f 
	\end{equation}	
	Let us differentiate the equality (\ref{e1}) on $x$ and then on $y$.
	We obtain
	\begin{equation}\label{e2}
		P'_xf'_x + Pf''_{x^2} + Q'_xf'_y + Qf''_{yx} =
		\lambda'_x f + \lambda f'_x,
	\end{equation}
	\begin{equation}\label{e3}
		P'_yf'_x + Pf''_{xy} + Q'_yf'_y + Qf''_{y^2} =
		\lambda'_y f + \lambda f'_y.
	\end{equation}
	Further, write down the product  of derivations $T$ and $D_f$ in terms of their components: 
	\begin{equation}\label{commuting}  
		[T, D_f] = (P'_xf'_y - P'_yf'_x - Pf''_{yx} - Qf''_{y^2}) \partial_x + \\
		(Pf''_{x^2} + Qf''_{xy} + f'_yQ'_x - f'_xQ'_y) \partial_y.
	\end{equation}
	Let us denote for convenience 
	$\alpha = -P'_y f'_x - Pf''_{yx} - Qf''_{y^2}$ and
	$\beta = Pf''_{x^2} + Qf''_{xy} + f'_y Q'_x$. Then using (\ref{e3}) and (\ref{e2}) we  see that
	\begin{equation}\label{e5}
		\alpha = Q'f'_y-\lambda '_yf-\lambda f'_y,  \  \beta =\lambda '_xf+\lambda f'_x-P'_xf'_x. 
	\end{equation} 
	
	The equality (\ref{commuting}) can be rewritten  in the form
	$$[T, D_f] = (P'_x f'_y + \alpha) \partial_x + (\beta - f'_x Q'_y) \partial_y.$$
	Inserting in the last equality instead $\alpha$ and $\beta$ their expressions from 
	(\ref{e5}) we see that 
	$$[T, D_f] = (P'_x f'_y - \lambda'_y f - \lambda f'_y + Q'_y f'_y ) \partial_x +
	(\lambda'_x f + \lambda f'_x - P'_x f'_x - f'_x Q'_y) \partial_y.$$
	After rearranging  the summands in the right part of this equality we get
	$$ [T, D_f] = ((\diver T) f'_y - (\lambda f)'_y) \partial_x +
	((\lambda f)'_x - (\diver T) f'_x )\partial_y $$
	The latter means that
	$$ [T, D_f] = ( \diver T )( f'_y \partial_x - f'_x \partial_y) +
	D_{\lambda f} = D_{\lambda f} - (\diver T)D_f.$$
	The proof is complete.
	
\end{proof}

\begin{remark}\label{rem1}
	The direct calculation  shows that $D_{\lambda f} = \lambda D_f + f D_\lambda$.
	Therefore
	$$ [T, D_f] = \lambda D_f + fD_\lambda - (\diver T)D_f = fD_\lambda + (\diver T-\lambda)D_f.$$
\end{remark}
\begin{lemma}\label{lem3} Let $T \in W_2(\mathbb K), f \in \mathbb K[x, y]$ be such
	that $[T, D_f] = 0$. If $T(f) = c$ for some $c \in \mathbb K$, then $T = D_g$ for some
	polynomial $g \in \mathbb K[x, y]$.
\end{lemma}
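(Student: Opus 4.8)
The plan is to reduce everything to Lemma~\ref{lem1}(3): since $[T,D_f]=0$ is assumed, it suffices to prove that $\diver T = 0$, because then Lemma~\ref{lem1}(3) immediately yields $T = D_g$ for some $g \in \mathbb{K}[x,y]$. So the entire task is to read off the divergence of $T$ from the vanishing commutator $[T,D_f]$.

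To do this I would first establish the general commutator identity
$$[T, D_f] = D_{T(f)} - (\diver T)\,D_f,$$
valid for \emph{arbitrary} $T \in W_2(\mathbb{K})$ and $f \in \mathbb{K}[x,y]$. This is the formula of Lemma~\ref{lem2} with the special relation $T(f)=\lambda f$ dropped, and it is proved in the same way: writing $T = P\partial_x + Q\partial_y$ and differentiating $T(f)=Pf'_x+Qf'_y$ in $x$ and in $y$, one substitutes the resulting second-order expressions into the component form of $[T,D_f]$ given in~(\ref{commuting}) and reassembles the two coefficients as $-(T(f))'_y + (\diver T)f'_y$ and $(T(f))'_x - (\diver T)f'_x$, which are exactly the components of $D_{T(f)} - (\diver T)D_f$. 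The main point to watch is that Lemma~\ref{lem2} cannot be cited verbatim here, because $T(f)=c$ is not of the form $\lambda f$ unless $c=0$; its computation nevertheless goes through unchanged once one observes that the right-hand sides of the differentiated relations are precisely the partial derivatives of $T(f)$. I expect this upgrade of Lemma~\ref{lem2} to the $\lambda$-free identity to be the only genuine obstacle in the argument.

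Finally I would specialize to the hypothesis $T(f)=c\in\mathbb{K}$. Since $T(f)$ is then constant, $D_{T(f)}=D_c=0$, and the identity collapses to $[T,D_f] = -(\diver T)\,D_f$. The assumption $[T,D_f]=0$ therefore gives $(\diver T)\,D_f = 0$. We may assume $f\notin\mathbb{K}$, so that $D_f\neq 0$ (a constant $f$ yields $D_f=0$ and is excluded, as the statement concerns genuine Jacobian derivations); because $W_2(\mathbb{K})$ is a free, hence torsion-free, module over the integral domain $\mathbb{K}[x,y]$, the relation $(\diver T)\,D_f = 0$ with $D_f\neq 0$ forces $\diver T = 0$. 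Applying Lemma~\ref{lem1}(3) then produces a polynomial $g$ with $T = D_g$, completing the proof.
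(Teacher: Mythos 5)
Your proposal is correct and takes essentially the same route as the paper: the paper's own proof likewise differentiates $T(f)=c$ in $x$ and $y$, substitutes into the component formula (\ref{commuting}) to get $[T,D_f]=-(\diver T)D_f$, concludes $\diver T=0$, and invokes Lemma \ref{lem1}(3), so your ``$\lambda$-free'' identity $[T,D_f]=D_{T(f)}-(\diver T)D_f$ is just that same computation stated in general form before specializing. Your explicit treatment of the final step (torsion-freeness of $W_2(\mathbb K)$ over $\mathbb K[x,y]$ and the exclusion of constant $f$, where $D_f=0$) tidies up a point the paper leaves implicit, but it is a refinement, not a different argument.
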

\begin{proof}
	
	Let us write down the derivation $T$ in the form $T = P\partial_x + Q\partial_y$ for some
	$P, Q \in \mathbb K[x, y]$. Then $Pf'_x + Qf'_y = c$ by conditions of the lemma. Differentiating this equality
	first on $x$ and then on $y$, we obtain the next equalities
	\begin{equation}\label{e6}
		P'_xf'_x + Pf''_{x^2} + Q'_xf'_y + Qf''_{yx} = 0,
	\end{equation}
	\begin{equation}\label{e7}
		P'_yf'_x + Pf''_{xy} + Q'_yf'_y + Qf''_{y^2} = 0.
	\end{equation}
	We see from (\ref{e6}) that 
	$$ Pf''_{x^2}+Q'_xf'_y=-P'_xf\_x-Qf''_{xy}$$
	and anagously from (\ref{e7}) 
	$$ P'_yf'_{x}+P_xf''_{xy}=-Q'_yf'_y-Qf''_{y^2}.$$
	Therefore it follows from (\ref{commuting}) that
	$$ [T, D_f] = (-Q'_yf'_x - P'_xf'_x)\partial_y + (P'_xf'_y + Q'_yf'_y)\partial_x = $$
	$$ = f'_y(P'_x + Q'_y) \partial_x - f'_x(P'_x + Q'_y) \partial_y = \diver T \cdot (-D_f). $$
	Since  $[T, D_f]=0$ by conditions of the lemma,  we see from the equality $[T, D_f]=-(\diver T)D_f$, that   $\diver T=0$. 
	It follows from Lemma 1 that $T = D_g$ for some polynomial $g \in \mathbb K[x, y].$
	
\end{proof}

\begin{corollary}\label{cor1}
	(1)
	Let $T \in C_{W_2(\mathbb K)}(D_f).$ If $T(f) = 0,$ then $T = \varphi D_f$ for
	some rational  function $\varphi \in \mathbb K(x, y)$ such that $\varphi \in \Ker D_f.$
	(2)
	If $T(f) = c$ for some $c \in \mathbb K^*$, then $T = D_g$ for some
	$g \in \mathbb K[x, y]$ such that $f, g$ form a  Jacobian pair,
	i.e., $D_g(f) = -D_f(g) = c$.
\end{corollary}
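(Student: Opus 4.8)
The plan is to prove the two parts separately, reducing each to an earlier result; throughout I assume $f$ is non-constant, since otherwise $D_f=0$ and both claims are vacuous.

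For part (1) I would first pass to the rational function field $R=\mathbb K(x,y)$ and regard derivations as elements of $\mathrm{Der}_{\mathbb K}R$. Writing $T=P\partial_x+Q\partial_y$, the hypothesis $T(f)=0$ becomes the single $R$-linear equation $Pf'_x+Qf'_y=0$ in the coefficients $(P,Q)$. Because $f$ is non-constant, the gradient $(f'_x,f'_y)$ is nonzero, so the solution space of this equation over $R$ is one-dimensional and is spanned by $(-f'_y,f'_x)$, that is, by the coefficient vector of $D_f$. Hence $T=\varphi D_f$ for some $\varphi\in R$. To finish I would show $\varphi\in\Ker D_f$: applying the Leibniz identity of Lemma~\ref{lem1}(1) with $D_1=D_2=D_f$ and constant second factor gives $[T,D_f]=[\varphi D_f,D_f]=-D_f(\varphi)\,D_f$. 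Since $T$ centralizes $D_f$ and $D_f\neq0$, this forces $D_f(\varphi)=0$, so $\varphi$ lies in the field of constants of $D_f$ (equivalently $\varphi\in\Frac(\Ker D_f)$), as claimed.

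For part (2) the main work is already carried out by Lemma~\ref{lem3}. The hypotheses $[T,D_f]=0$ and $T(f)=c\in\mathbb K$ give $T=D_g$ for some polynomial $g\in\mathbb K[x,y]$. Then $D_g(f)=T(f)=c$, and the antisymmetry of the Jacobian determinant, $\det J(g,f)=-\det J(f,g)$, yields $D_f(g)=-D_g(f)=-c$. Thus $D_g(f)=-D_f(g)=c$, which is precisely the assertion that $f$ and $g$ form a Jacobian pair.

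The only point requiring care is the localization step in part (1): Lemma~\ref{lem1}(1) is stated for polynomial coefficients, so I must note that both the representation $T=\varphi D_f$ with rational $\varphi$ and the subsequent bracket computation take place in $\mathrm{Der}_{\mathbb K}R$, where the same Leibniz formula holds verbatim. Beyond this, everything is a direct specialization of Lemmas~\ref{lem1} and \ref{lem3}, so I do not anticipate a genuine obstacle.
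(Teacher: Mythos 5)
Your proof is correct. Part (2) is essentially the paper's own argument: both invoke the lemma stating that $[T,D_f]=0$ together with $T(f)=c\in\mathbb K$ forces $T=D_g$ (note the paper's text cites Lemma~\ref{lem1} at that point, but it is really an appeal to Lemma~\ref{lem3}, exactly as you use it), and then read off the Jacobian-pair condition from antisymmetry of $\det J(\cdot,\cdot)$. Part (1), however, takes a genuinely different route. The paper picks an auxiliary polynomial $h$ with $f,h$ algebraically independent, sets $g_1=D_f(h)$, $g_2=T(h)$, argues $g_1\neq 0$ because otherwise $\Ker D_f$ would have transcendence degree $2$, and concludes that $g_2D_f-g_1T$ vanishes identically because it kills the transcendence basis $\{f,h\}$ of $\mathbb K(x,y)$; this gives $T=(g_2/g_1)D_f$. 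You instead work with coefficient vectors over $R=\mathbb K(x,y)$: since $f$ is nonconstant and the characteristic is zero, the gradient $(f'_x,f'_y)$ is nonzero, so the kernel of the $R$-linear functional $(P,Q)\mapsto Pf'_x+Qf'_y$ on $R^2$ is one-dimensional and spanned by $(-f'_y,f'_x)$, the coefficient vector of $D_f$. Your version is more elementary (plain linear algebra, no transcendence-basis argument and no need to rule out $D_f(h)=0$), and it isolates exactly where nonconstancy of $f$ is used; the paper's version is the one that adapts directly to situations where one kills a derivation on a full transcendence basis (e.g.\ in more variables), where your ``codimension one'' count is special to the plane. The concluding step is the same in both proofs: $[\varphi D_f,D_f]=-D_f(\varphi)D_f$ in $\Der_{\mathbb K}R$, so $[T,D_f]=0$ and $D_f\neq 0$ force $D_f(\varphi)=0$, i.e.\ $\varphi$ lies in the field of constants $\mathbb K(p)=\Frac(\Ker D_f)$, which is how the statement's condition $\varphi\in\Ker D_f$ must be read (the paper reads it the same way). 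One small wording caveat: for constant $f$ part (1) is not vacuous but false ($D_f=0$ while $T$ is arbitrary), so the standing assumption that $f$ is nonconstant, which the paper makes implicitly, is genuinely needed rather than merely convenient.
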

\begin{proof}
	(1)
	Take any polynomial $h \in \mathbb K[x, y]$ such that $f, h$
	are algebraically independent  over $\mathbb K$ and put $g_1 = D_f(h), g_2 = T(h)$.
	Then $g_1 \neq 0$ because in other case $\Ker D_f$ would be of transcendence degree 2
	in $\mathbb K(x, y)$ which is impossible. Note that $(g_2 D_f - g_1 T)(h) = 0$
	and $(g_2D_f - g_1T)(f) = 0$. Since $f, h$ form a transcendence basis of the field $\mathbb K(x, y)$, the next equality holds:
	that $g_2D_f - g_1T = 0$.
	Therefore $T = (g_2 / g_1) D_f$. It follows from the equality
	$[T, D_f] = 0$ that $D_f(g_2 / g_1) = 0$, that is $g_2 / g_1 \in \Ker D_f$. Denoting $\varphi =g_2/g_1$ we get the proof of part (1) of the corollary.
	
	(2)
	Since $T(f) = c$ we have by Lemma \ref{lem1} that $T = D_g$ for some polynomial $g \in \mathbb K[x, y]$.
	But then $T(f) = D_g(f) = \det J(g, f) = c \in \mathbb K^*$.
	The latter means that the polynomials  $f, g$ form a Jacobian pair.
	
\end{proof}

\begin{theorem}\label{Th1}
	Let $f \in \mathbb K[x, y], f = \theta(p)$ for a generative 
	closed polynomial $p \in \mathbb K[x, y]$ with $\deg \theta \geq 1$.
	A derivation $T \in W_2(\mathbb K)$ commutes with $D_f$ if and only if
	$T(p) = \psi(p)$ for some polynomial $\psi(t) \in \mathbb K[t]$ and
	$  \theta''(p)\psi(p) = \theta'(p)( \diver T - \psi'(p)).$
\end{theorem}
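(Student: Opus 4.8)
The plan is to reduce everything to the relation $D_f=\theta'(p)D_p$ together with a single commutator computation. First I would record the consequence of $f=\theta(p)$ at the level of derivations: since $f'_x=\theta'(p)p'_x$ and $f'_y=\theta'(p)p'_y$, we get $D_f=\theta'(p)D_p$. Here $\theta'(p)$ is a nonzero element of $\mathbb K[x,y]$, because $\deg\theta\ge 1$ forces $\theta'\ne 0$ and $p$ is non-constant (so $\theta'(p)$ cannot vanish identically). Likewise $D_p\ne 0$, and by Lemma \ref{lem1}(2) applied to $p$ (which is its own generative closed polynomial) one has $\Ker D_p=\mathbb K[p]$.

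The technical heart is a mild strengthening of Lemma \ref{lem2}: for every $T\in W_2(\mathbb K)$ and every $g\in\mathbb K[x,y]$,
\[
[T,D_g]=D_{T(g)}-(\diver T)D_g.
\]
I would prove this exactly as Lemma \ref{lem2}, observing that the computation there is valid with $\lambda g$ replaced by the arbitrary polynomial $\mu:=T(g)$: differentiating $T(g)=\mu$ in $x$ and in $y$ and substituting into (\ref{commuting}) produces the coefficients $(\diver T)g'_y-\mu'_y$ and $\mu'_x-(\diver T)g'_x$, which assemble precisely into $D_\mu-(\diver T)D_g$.

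With these two facts the theorem becomes a direct calculation. For necessity, assume $[T,D_f]=0$ and apply it to $p$: since $D_f(p)=\theta'(p)D_p(p)=0$, we obtain $\theta'(p)D_p(T(p))=0$, hence $D_p(T(p))=0$, so $T(p)\in\Ker D_p=\mathbb K[p]$, i.e.\ $T(p)=\psi(p)$ for some $\psi\in\mathbb K[t]$. Next, using Lemma \ref{lem1}(1) with $D_f=\theta'(p)D_p$ and the chain rule $T(\theta'(p))=\theta''(p)T(p)$,
\[
[T,D_f]=[T,\theta'(p)D_p]=\theta''(p)\psi(p)D_p+\theta'(p)[T,D_p].
\]
Applying the strengthened identity with $g=p$ together with $D_{\psi(p)}=\psi'(p)D_p$ gives $[T,D_p]=(\psi'(p)-\diver T)D_p$, and substituting yields
\[
[T,D_f]=\bigl(\theta''(p)\psi(p)+\theta'(p)\psi'(p)-\theta'(p)\diver T\bigr)D_p.
\]
Since $D_p\ne 0$, the vanishing of $[T,D_f]$ is equivalent to $\theta''(p)\psi(p)=\theta'(p)(\diver T-\psi'(p))$, which is the asserted condition. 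For sufficiency I would read the same chain of equalities backwards: the hypotheses $T(p)=\psi(p)$ and $\theta''(p)\psi(p)=\theta'(p)(\diver T-\psi'(p))$ force the displayed coefficient to vanish, whence $[T,D_f]=0$.

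The only genuinely delicate point is the strengthened commutator identity; everything else is bookkeeping. I expect the main care to be in fixing the sign convention for $D_g$ consistently, so that the coefficient of $D_p$ comes out correctly, and in justifying that $T(p)\in\mathbb K[p]$ is legitimately extracted from $D_p(T(p))=0$ — this is exactly the step where the closedness of $p$, through $\Ker D_p=\mathbb K[p]$, is essential.
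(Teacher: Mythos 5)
Your proof is correct, and it takes a genuinely cleaner route than the paper's. The crucial difference is your strengthened commutator identity $[T,D_g]=D_{T(g)}-(\diver T)D_g$, valid for \emph{all} $T\in W_2(\mathbb K)$ and $g\in\mathbb K[x,y]$: as you observe, the paper's proof of Lemma \ref{lem2} never really uses the hypothesis $T(f)=\lambda f$ except to treat $\lambda f$ as a single polynomial, so that computation already establishes the general statement. The paper, however, keeps Lemma \ref{lem2} in its eigenvalue-restricted form, and this forces its proof of Theorem \ref{Th1} into a case analysis that your argument avoids entirely: for $\deg\psi\ge 1$ the paper must first shift $p\mapsto p-\lambda_1$ so that $\psi(p)$ becomes divisible by $p$ (making Lemma \ref{lem2} applicable with $f=p$), while the case $\psi(t)=c$ constant is handled separately through Lemma \ref{lem3}, divergence-freeness, Jacobian pairs, and (for $c\ne 0$) a contradiction argument showing $\deg\theta=1$; the sufficiency direction repeats this split. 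Your uniform computation $[T,D_f]=\bigl(\theta''(p)\psi(p)+\theta'(p)\psi'(p)-\theta'(p)\diver T\bigr)D_p$, combined with the facts that $D_p\ne 0$ and $\mathbb K[x,y]$ is a domain, gives both directions at once; likewise your extraction of $T(p)\in\Ker D_p=\mathbb K[p]$ from $[T,D_f](p)=0$ is a correct, slightly more explicit version of the paper's remark that $T$ preserves $\Ker D_f$. What the paper's route buys is only that it reuses its lemmas exactly as stated; what yours buys is the elimination of all case distinctions and of any appeal to Lemma \ref{lem3}.
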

\begin{proof}
	Let $[T, D_f] = 0.$ 
	By Lemma \ref{e1}, $\Ker D_f=\mathbb K[p]$, therefore $T(\mathbb K[p])  \subseteq \mathbb K[p].$ Then $T(p)=\psi (p)$ for some polynomial $\psi (t)\in \mathbb K[t].$ 
	Let us prove the equality
	\begin{equation}\label{main} 
		\theta''(p)\psi(p) = \theta'(p)( \diver T - \psi'(p)).
	\end{equation}
	
	First, let $\deg \psi(t) \geq 1$. Write
	$\psi(t) = a_0(t - \lambda_1) \dots (t - \lambda_k)$,
	where $k \geq 1$ and $\lambda_i \in \mathbb K$
	(recall that $\mathbb K$ is algebraically closed).
	Therefore $T(p) = a_0(p - \lambda_1) \dots (p - \lambda_k)$.
	This equality can be written in the form
	$$T(p - \lambda_1) = a_0(p - \lambda_1) \dots (p - \lambda_k)$$
	and taking $p-\lambda _1$ instead of $p$ 
	we can write the last equality as $T(p) = a_0 p (p - \mu_2) \dots (p - \mu_k)$
	for some $\mu_k \in \mathbb K$ (note that the polynomial $p - \lambda_1$
	is also closed and $\mathbb K[p] = \mathbb K[p - \lambda_1]$).
	The last equality can be written in the form
	$$ T(p) = \psi (p)= p \mu(p) \text{ for} \ \mu(p) = a_0 p (p - \mu_2) \dots (p - \mu_k).$$
	By Lemma \ref{lem2}, we have
	\begin{equation}\label{e9}
		[T, D_p] = D_{\psi(p)} - (\diver T) D_p = (\psi'(p) - \diver T)D_p.
	\end{equation}
	But, on the other hand, it follows from the equality 
	$$[T, D_f] = [T, \theta'(p)D_p] = 0$$
	that $T(\theta'(p))D_p = -\theta'(p)[T, D_p]$ and therefore
	\begin{equation}\label{e10}  
		[T, D_p] =- \frac{T(\theta'(p)}{\theta'(p)}D_p
	\end{equation}
	(note that $\theta'(p) \neq 0$ because $f = \theta(p)$ and $f \neq const$).
	
	It follows from (\ref{e9}) and (\ref{e10}) that
	$$ - \frac{T(\theta'(p)}{\theta'(p)} =\psi'(p) - \diver T. $$
	Therefore it holds the desired   equality  $\theta''(p) \psi(p) = \theta'(p)(\diver T - \psi'(p))$ because $T(\theta '(p))=\theta ''(p)\psi (p).$
	
	Now let $\deg \psi(t) < 1$. The latter means that $\psi(t) = c \in \mathbb K$.
	
	By Lemma 3, $T = D_g$ for some polynomial  $g \in \mathbb K[x, y]$ and therefore
	$\diver T = 0$. If $c = 0$, that is $\psi(t) \equiv 0$, then obviously (\ref{main}) holds.
	Let $c \neq 0$. Then $D_g(p) = c$ and the polynomials $p, g$ form a Jacobian pair.
	Let us show that $\deg \theta = 1$ in this case. Indeed, in other case
	$$[T, D_f]=[D_g, D_f] = [D_g, \theta'(p)D_p] = \theta''(p) \cdot c \cdot D_p + \theta '(p)[D_g, D_p].  $$
	By Lemma \ref{lem1}, $[D_g, D_p]=D_c =0$ (recall that $D_g(p)=c$) and therefore $$[T, D_f]=\theta''(p) \cdot c \cdot D_p=0.$$
	The latter contradicts our choice of the derivation $T$ because $c\not =0,$ and $\theta ''\not =0$ by our assumption.
	Therefore  $\deg \theta(t) = 1$.  Taking into account the relations  $\theta''(p) = 0$, 
	$\diver T = 0$, $\psi'(p) = 0$
	we see that the equality (\ref{main}) holds.
	
	Let now 
	$T(p) = \psi(p), f = \theta(p)$ for some closed polynomial $p$, and let the equality (\ref{main}) hold.
	Let us show that $[T, D_f] = 0$, i.e. $[T, \theta'(p)D_p] = 0$.
	The last equality is equivalent to the equality
	\begin{equation}\label{useful}
		T(\theta'(p))D_p = - \theta'(p)[T, D_p]. 
	\end{equation}
	First, consider the case $\deg \psi(t) \geq 1$.
	Then as above one can assume without loss of generality that
	$\psi(t) = t\lambda(t)$ for some polynomial $\lambda(t) \in \mathbb K[t]$.
	By Lemma \ref{lem2}, we get 
	$$[T, D_p] = (\psi'(p) - \diver T) D_p.$$
	Using (\ref{useful}) one can easily show that the latter equality is equivalent to the equality (\ref{main}):
	$$ \theta''(p)\psi(p)D_p = \theta'(p)( \diver T - \psi'(p))D_p, $$
	which holds by our assumptions. So, we have $[T, D_f] = 0$
	in the case $\deg \psi(t) \geq 1$.
	
	Consider the case $\deg \psi(t) < 1$, i.e., $\psi (t)=c$ for some $c\in \mathbb K$ . If $\psi(t) \equiv 0$, then $\diver T=0$ by the equality (\ref{main}).
	Therefore (by Lemma \ref{lem1}) $T = D_g$ for some polynomial $g \in \mathbb K[x, y]$ and  $T(p) = 0 = D_g(p).$ Thus $D_p(g) = 0$, i.e., $g\in \Ker D_p$. 
	It follows from the equality $\Ker D_p = \mathbb K[p]$ that
	$g = \mu(p)$ for some polynomial $\mu(t) \in \mathbb K[t]$.
	But then $T = D_g = \mu'(p)D_p$. Taking into account the equality
	$D_f = D_{\theta(p)} = \theta'(p)D_p$ we get
	$$ [T, D_f] = [\mu'(p)D_p, \theta'(p)D_p] = 0 $$
	because $\mu '(p), \theta '(p) \in \Ker D_p . $
	Let now $\psi(t) = c, c \in \mathbb K^*$.
	Then $T(p) = c$ and from the conditions of the theorem we have
	$\theta''(p) \cdot c = \theta'(p) \diver T$. The latter equality implies that  $\diver T = 0$ because $\diver T $  is a polynomial and $\deg \theta''(p) < \deg \theta'(p) .$
	But then 
	$T = D_g$ for some polynomial $g(t) \in \mathbb K[t]$.
	It follows from the conditions of the theorem that $\theta''(p) = 0$
	and hence $\theta(p) = \alpha p + \beta$ for some
	$\alpha, \beta \in \mathbb K, \alpha \neq 0$.
	Without loss of generality one can assume that $f = \theta(p) = p$.
	We have $T(p) = c$ and $T = D_g$. Then $D_p(g) = -c$ and therefore
	the polynomials $p, g$ form a Jacobian pair. The latter means that
	$$ [T, D_f] = [D_g, D_f] = D_{[p. g]} = D_c = 0$$ 
	that is $T$ and
	$D_f$ commute. The proof of the theorem is complete.
\end{proof}

\begin{corollary}\label{cor4} Let $f \in \mathbb K[x, y]$
	be a closed (in particular, irreducible) polynomial. A derivation
	$T \in W_2(\mathbb K)$ commutes with $D_f$ if and only if
	$T(f) = \psi(f)$ for some polynomial $\psi(t) \in \mathbb K[t]$
	and $\diver T = \psi'(f)$.
\end{corollary}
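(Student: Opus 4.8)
The plan is to obtain this statement as the immediate specialization of Theorem \ref{Th1} to the case in which the inner function is the identity. The starting observation is that when $f$ itself is a closed polynomial, $f$ is its own generative closed polynomial: by the definition recalled in the introduction, the generative closed polynomial is the closed polynomial $p$ of least degree with $f = \varphi(p)$ for some $\varphi \in \mathbb{K}[t]$, and since $\mathbb{K}[f]$ is already integrally closed we may take $p = f$ and $\varphi(t) = t$ (up to an affine change $p \mapsto ap + b$, which alters neither $\mathbb{K}[p]$ nor $D_f$). Hence in the notation of Theorem \ref{Th1} we have $p = f$ and $\theta(t) = t$, so that $\theta'(t) = 1$ and $\theta''(t) = 0$, and the requirement $\deg\theta \geq 1$ is met.

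With these values the two conditions supplied by Theorem \ref{Th1} simplify at once. The condition $T(p) = \psi(p)$ becomes $T(f) = \psi(f)$ for some $\psi(t) \in \mathbb{K}[t]$, while the relation $\theta''(p)\psi(p) = \theta'(p)(\diver T - \psi'(p))$ collapses, upon inserting $\theta'' = 0$ and $\theta' = 1$, to $0 = \diver T - \psi'(f)$, that is $\diver T = \psi'(f)$. Because Theorem \ref{Th1} is an equivalence, both implications of the corollary are delivered simultaneously by this substitution: $T$ commutes with $D_f$ exactly when $T(f) = \psi(f)$ and $\diver T = \psi'(f)$.

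The only point that genuinely needs justification is the reduction $p = f$, $\theta(t) = t$ in the closed case; everything else is formal substitution, so there is no computational obstacle. For the parenthetical remark I would note that an irreducible $f$ is automatically closed: over the algebraically closed field $\mathbb{K}$ a factorization $f = \varphi(p)$ with $\deg\varphi \geq 2$ would write $f$ as a product $c\prod_i (p - a_i)$ of non-units, contradicting irreducibility, so $\varphi$ must be linear and $f$ closed. Thus the hypothesis ``closed (in particular, irreducible)'' is consistent, and the corollary follows directly from Theorem \ref{Th1}.
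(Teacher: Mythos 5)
Your proof is correct and follows essentially the same route as the paper: since $f$ is closed one takes $p=f$ and $\theta(t)=t$ in Theorem \ref{Th1}, whence $\theta''=0$, $\theta'=1$ and the relation (\ref{main}) reduces to $\diver T = \psi'(f)$, both implications coming from the equivalence in the theorem. Your additional verification that an irreducible polynomial is closed (a factorization $f=\varphi(p)$ with $\deg\varphi\geq 2$ would split $f$ into non-unit factors over the algebraically closed field $\mathbb K$) is a nice justification of the parenthetical remark that the paper leaves implicit.
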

\begin{proof}
	Since $f$ is closed  we can take without loss of generality thet   $\theta(t) = t.$ Then $\theta''(t) = 0$, and one can easily show that  (\ref{main}) is equivalent to the equality  $\diver T = \psi'(f)$.
\end{proof}
In \cite{Stein}, a class of Jacobian derivations was studied that was induced by weakly semisimple polynomials $f\in \mathbb K[x, y]$ (a polynomial $f$ is called weakly semisimple if the corresponding Jacobian derivation $D_f$ has an eigenfunction $g\in \mathbb K[x, y]$ with nonzero eigenvalue $\lambda \in \mathbb K$, i.e. if $D_f(g)=\lambda g$).  In \cite{Gavr_Step}, such polynomials were described in some cases and some examples were pointed out. Using  some results from \cite{Gavr_Step} one can construct Jacobian derivations whose centralizers are of rank $2$ over their the ring of constants.

\begin{example}
	Let  $f(x, y), g(x, y) \in \mathbb K[x, y]$ be nonzero  polynomials such that $D_f(g)=g.$ Then
	$$  [fD_g-gD_f, D_g]=-D_g(f)D_g-g[D_f, D_g]=gD_g-gD_g=0,  $$
	(here we use the equality $[D_f, D_g]=D_h,$ where $h=D_f(g)$).
	The latter means that the Jacobian derivation $D_g$ has the centralizer in $W_2(\mathbb K)$ of rank $2$ over its ring of constants. This centralizer contains two linearly independent (over $\mathbb K[x, y]$ ) derivations $D_g$ and $fD_g-gD_f.$ Let us choose, for example,  the polynomials  $f$ and $g$ of  the form:  
	$$f(x, y)=x(x-1)y, \ \   g(x, y)=x^3(x-1)y^2.$$ 
	One can easily check that $D_f(g)=g.$  So, the derivation $D_g=-2yx^3(x-1)\partial _x+(4x^3-3x^2)y^2\partial _y$ has the centralizer in $W_2(\mathbb K)$ of rank $2$ over $\mathbb K[p]$ and the corresponding system of differential equations 
	$$ \frac{dx}{dt}=-2yx^3(x-1), \   \frac{dy}{dt}=(4x^3-3x^2)y^2$$
	is integrable (see, for example, \cite{Nagloo}).
\end{example}

\section{On structure of  centralizers of Jacobian derivations }
\begin{theorem}
	Let $f\in \mathbb K[x, y]$ be a nonconstant polynomial and $D_f$ the corresponding Jacobian  derivation. Let $p$ be a   generative closed polynomial   for $f$. Then the centralizer $C_{W_2(\mathbb K)}(D_f)$ is a free module of rank $1$ or $2$ over the subring  $\mathbb K[p]$  of $\mathbb K[x, y].$
\end{theorem}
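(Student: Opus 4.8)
The plan is to study $C:=C_{W_2(\mathbb K)}(D_f)$ through the single $\mathbb K[p]$-linear functional $T\mapsto T(p)$ and then to split the resulting short exact sequence of $\mathbb K[p]$-modules. Write $f=\theta(p)$, so that $D_f=\theta'(p)D_p$ with $\theta'(p)\neq0$. First I would record that $C$ is genuinely a $\mathbb K[p]$-module: for $a\in\mathbb K[p]=\Ker D_f$ and $T\in C$, Lemma~\ref{lem1}(1) gives $[aT,D_f]=a[T,D_f]-D_f(a)T=0$, so $aT\in C$. Moreover $[T,D_f]=0$ forces $T$ to preserve $\Ker D_f=\mathbb K[p]$ (if $D_f(u)=0$ then $D_f(T(u))=T(D_f(u))+[D_f,T](u)=0$), so each $T\in C$ satisfies $T(p)\in\mathbb K[p]$. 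Hence $\Phi\colon C\to\mathbb K[p]$, $\Phi(T)=T(p)$, is a well-defined homomorphism of $\mathbb K[p]$-modules.

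The decisive step is to compute $\Ker\Phi$ and show it is free of rank $1$. If $T\in C$ and $T(p)=0$, then $T(f)=\theta'(p)T(p)=0$, so by Corollary~\ref{cor1}(1) we may write $T=\varphi D_f=\bigl(\varphi\,\theta'(p)\bigr)D_p$, where $\varphi$ is a rational $D_f$-constant, i.e.\ $\varphi\in\mathbb K(p)$. Setting $g=\varphi\,\theta'(p)\in\mathbb K(p)$ and writing $g=a(p)/b(p)$ with $a,b\in\mathbb K[t]$ coprime, the polynomiality of $T=gD_p=-gp'_y\partial_x+gp'_x\partial_y$ forces $b(p)$ to divide both $a(p)p'_x$ and $a(p)p'_y$; since a Bézout identity $u a+v b=1$ in $\mathbb K[t]$ shows $a(p),b(p)$ are coprime in $\mathbb K[x,y]$, we get $b(p)\mid\gcd(p'_x,p'_y)$. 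Here is where I expect the main difficulty, namely ruling out a nonconstant denominator. This is settled by a degree count: as $p$ is nonconstant, $\gcd(p'_x,p'_y)\neq0$ and $\deg\gcd(p'_x,p'_y)\le\deg p-1<\deg p\le\deg b(p)$ whenever $b$ is nonconstant, a contradiction. Therefore $b$ is constant, $g\in\mathbb K[p]$, and $T\in\mathbb K[p]D_p$. Since conversely $D_p\in C$ and $D_p(p)=0$, we conclude $\Ker\Phi=\mathbb K[p]D_p$, a free $\mathbb K[p]$-module of rank $1$.

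Finally, $\operatorname{Im}\Phi$ is a $\mathbb K[p]$-submodule of $\mathbb K[p]$, i.e.\ an ideal of the principal ideal domain $\mathbb K[p]\cong\mathbb K[t]$, so it is either $0$ or free of rank $1$. Because $\operatorname{Im}\Phi$ is free, hence projective, the short exact sequence $0\to\Ker\Phi\to C\xrightarrow{\Phi}\operatorname{Im}\Phi\to0$ splits, giving $C\cong\Ker\Phi\oplus\operatorname{Im}\Phi$. Concretely, if $\operatorname{Im}\Phi=s(p)\mathbb K[p]\neq0$ one picks $T_0\in C$ with $T_0(p)=s(p)$ and checks that each $T\in C$ is uniquely $a(p)T_0+b(p)D_p$. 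Thus $C$ is free over $\mathbb K[p]$ of rank $1$ (when $\operatorname{Im}\Phi=0$, i.e.\ every element of $C$ annihilates $p$) or of rank $2$ (when some element of $C$ moves $p$), which is exactly the assertion; the dichotomy moreover matches the rank-$2$ criterion of Theorem~\ref{Th1}.
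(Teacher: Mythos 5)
Your proof is correct, and while it shares its computational core with the paper's argument, it is organized along genuinely different lines. The paper proceeds by a case split on whether $C_1=\mathbb K[x,y]C$ has rank $1$ or $2$ over $\mathbb K[x,y]$: in the rank-$1$ case it writes every $T\in C$ as $(a(p)/b(p))D_p$ and clears the denominator by the same coprimality-plus-degree argument you use; in the rank-$2$ case it chooses $T_0\in C$ whose polynomial $\mu_0$ (with $T_0(p)=\mu_0(p)$) has minimal degree and runs Euclidean division in $\mathbb K[t]$ to write every $T$ as $q(p)T_0+\delta(p)D_p$. You instead package everything into the single $\mathbb K[p]$-module homomorphism $\Phi(T)=T(p)$ and the exact sequence $0\to\Ker\Phi\to C\to\mathrm{Im}\,\Phi\to 0$: your kernel computation is the paper's denominator-clearing argument, and your observation that $\mathrm{Im}\,\Phi$ is an ideal of the PID $\mathbb K[p]$, hence zero or free of rank $1$, is exactly what the paper's minimal-degree/division step proves by hand. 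Your route buys three things. First, no case distinction on $\rk_{\mathbb K[x,y]}C_1$ is needed; the dichotomy rank $1$ versus rank $2$ falls out as $\mathrm{Im}\,\Phi=0$ versus $\mathrm{Im}\,\Phi\neq 0$. Second, the splitting (or your explicit uniqueness check) delivers the $\mathbb K[p]$-linear independence of $\{T_0,D_p\}$, i.e.\ genuine freeness, a point the paper asserts but does not verify. Third, it makes explicit a step the paper glosses over in its rank-$2$ case: there, Corollary~\ref{cor1}(1) only yields $T_1=\varphi D_f$ with $\varphi\in\mathbb K(p)$ \emph{rational}, so passing to $T_1=\delta(p)D_p$ with $\delta$ a polynomial silently requires the same denominator-clearing argument again, which your kernel computation supplies once and for all. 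What the paper's version buys in exchange is that it stays entirely elementary, never invoking projectivity or splittings of exact sequences. Both proofs rest on the same outside inputs: the fact that the kernel of $D_f$ extended to $\mathbb K(x,y)$ equals $\mathbb K(p)$ (cited from \cite{Petien}) and Corollary~\ref{cor1}.
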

\begin{proof}
	Since $p$ is a generative closed polynomial for $f$ we have  $f=\theta (p)$  for some polynomial $\theta (t)\in \mathbb K[t].$  Obviously $D_f=\theta '(p)D_p$ and,  by Lemma \ref{e1}, ${\rm ker}D_f={\rm ker} D_p.$
	Let us denote for brevity $C=C_{W_2(\mathbb K)}(D_f)$.   Obviously $C$ is a module over the subring $\mathbb K[p]$ of $\mathbb K[x, y]$. Denote $C_1=\mathbb K[x, y]C$, it is obvious that $C_1$ is a $\mathbb K[x, y]$-module and  ${\rm rk}_{\mathbb K[x, y]}C_1\leq 2$ (recall that $W_2(\mathbb K)$ is a free $\mathbb K[x, y]$-module of rank $2$).
	
	First, let ${\rm rk}_{\mathbb K[x, y]}C_1=1. $ Let us show that in this case ${\rm rk}_{\mathbb K[p]}C=1 $ and $D_p$ is a free generator of the module $C$. Take any $T\in C,  T\not =0.$   By our assumptions on $C_1$, there exist polynomials $g, h\in \mathbb K[x, y]$ such that $gD_p+hT=0$, and at least one of the polynomials $g, h$ is nonzero.  Since $D_p\not =0$ we have $h\not =0$ and therefore $T=-(g/h)D_p.$ But then  $D_f(g/h)=0$. The latter means that the rational function $g/h$ belongs to ${\rm ker }_RD_f,$ where ${\rm ker }_RD_f$ is the kernel of the extension of $D_f$ on $R=\mathbb K(x, y).$ Since ${\rm ker }_RD_f=\mathbb K(p)$ (see, for example \cite{Petien})  we have  $g/h=a(p)/b(p)$ for some polynomials $a(t), b(t)\in \mathbb K[t]$ that can  be chosen to be coprime.  Thus, we obtain the equality $a(p)D_p+b(p)T=0$ and $a(p), b(p)$ are coprime. From the latter  equality  we see that $b(p)$  divides $D_p$, i.e. $D_p=b(p)D_1$ for some derivation $D_1\in W_2(\mathbb K).$ But then $b(p)=const$ because $D_p=-p'_y\partial _x+p'_x\partial _y$ and $b(p)$  does not divide $p'_y, p'_x$ if ${\rm deg}  b(t)>0.$  Thus, we have $T=(-a(p)/b(p))D_p$ and $D_p$ is a free generator for the centralizer $C=C_{W_2(\mathbb K)}(D_f)$ as a $\mathbb K[p]$-module.
	
	Let now ${\rm rk}_{\mathbb K[x, y]}C_1=2. $  Choose any $T\in C$ such that $T$ and $D_p$ are linearly independent over $\mathbb K[x, y]$. It follows from the equality $[T, D_p]=0$ that $T({\rm ker}(D_f))\subseteq  {\rm ker}(D_f)$, so $T(\mathbb K[p])\subseteq \mathbb K[p].$ Therefore $T(p)=\mu (p)$ for some polynomial $\mu (t)\in \mathbb K[t].$  Choose among all such $T\in C$  a derivation $T_0$  such that ${\rm deg}\mu _0(t)$  is minimum, where $\mu _0(t)$ is the corresponding polynomial for $T_0$. One can easily show that for any $T\in C$ its polynomial $\mu (t)$ is divisible by $\mu _0(t)$.  Really, let 
	$$\mu (t)=q(t)\mu _0(t)+r(t)$$ 
	with ${\deg}r(t)<{\rm deg}\mu_0(t)$. Then $T-q(p)T_0\in C$ and $(T-q(p)T_0)(p)=r(p).$ By our choice of $T_0$ we have $r(t)=0$. So, every $T\in C$ can be written in the form $T=q(p)T_0+T_1$, where $T_1=T-q(p)T_0$ satisfies  the equality $T_1(p)=0.$ Since $[T_1, D_p]=0$ we can show using Corollary (\ref{cor1}) that   $T_1=\delta (p)D_p$ for some polynomial $\delta (t)\in \mathbb K[t]$.   Therefore $T=q(p)T_0+\delta (p)D_p.$ The latter means that the derivations $T_0$ and $D_p$ are free generators of the $\mathbb K[p]$-module $C.$  The proof is complete.
\end{proof}


%
\end{document}